\newtheorem{theorem}{Theorem}[section]
\newtheorem{lemma}[theorem]{Lemma}
\newtheorem{Example}{Example}[section]
\newtheorem{definition}{Definition}[section]
\newcommand{\block}[1]{
	\underbrace{\begin{matrix} n & \cdots & n \end{matrix}}_{#1} }
\begin{document}
\title{\bf Extension of Paley Construction for Hadamard Matrix}
\author {Shipra Kumari\thanks {E-mail: shipra.kumari@cuj.ac.in } ~and~ Hrishikesh Mahato\thanks{Corresponding author: E-mail: hrishikesh.mahato@cuj.ac.in}\\
	\small Department of Mathematics,
	\small Central University of Jharkhand,
	\small Ranchi-835205, India}
\date{}
\maketitle \setlength{\parskip}{.11 in}
\setlength{\baselineskip}{15pt}
\maketitle
\begin{abstract}
	We have extended the Paley constructions for Hadamard matrices and obtained some series of Hadamard matrices. Especially Paley construction-II is applicable for odd prime power $q \equiv 1(mod \ 4)$ however our method is applicable for any odd prime power. Some of which are non-isomorphic to the matrices obtained by standard Paley constructions, Sylvester construction, Williamson construction or combination of either of these constructions with Sylvester construction. In fact we have used the conference matrix of order prime power $q$ in some different manner which is applicable for any odd prime power $q$ or product of twin primes.\\

\end{abstract}

\textbf{Keywords}: Conference matrix, Quadratic Residue, Extended Quadratic Character, Kronecker Product.\\
\textbf{AMS Subject classification}: 05B20\\
\hrule

\section{Introduction:}
A Hadamard Matrix  is a square matrix $ H_{n} $ of order $ n $ with entries from $ \{ -1 ,1\} $ which satisfies the condition $ H_{n}H_{n}^{T} = nI_{n}$, where $  I_{n}$ is the Identity matrix of order $ n$ and $H_{n}^T$ is the transpose of $H_{n}$. The order of Hadamard Matrix  is necessarily  $  1 $ , $  2 $ or is multiple of $  4 $. The converse of this statement seems to be true and is known as Hadamard Conjecture  \cite{hord}.\\
Historically, Hadamard matrices were first introduced by \textbf{James Joseph Sylvester} in $1867$ \cite{sylvester}.
Sylvester presented a construction by considering that if $H$ is a Hadamard matrix of order $ n$ then  
\begin{equation*}
	\left[
	\begin{array}{cc}
		H& H\\
		H&-H
	\end{array}
	\right]
\end{equation*} is also a Hadamard matrix of order $2n$.
Starting with $H=1$, we get a Hadamard matrix of order $2^n$ for all non-negative integer $n$
\[
H_{1}=
\begin{bmatrix}
1 
\end{bmatrix}, \ \ \ \ 
H_{2}=
\begin{bmatrix}
1&1 \\
1&-1
\end{bmatrix}, \ \ \ \ 
H_{2^n}=
\begin{bmatrix}
H_{2^n-1}&H_{2^n-1}\\
H_{2^n-1}&-H_{2^n-1}
\end{bmatrix} 
\]
In this manner Sylvester constructed Hadamard matrix of order $2^n$ for every positive integer $n$.\\
Another popular construction of Hadamard matrix is Paley construction which was described in $1933$ by the english mathematician \textbf{Raymond Paley} \cite{Pal}. The Paley construction uses quadratic residues of a finite field $GF(q),$ where $q$ is an odd prime power. Two methods of Paley constructions has been distinguished by the types of $q$ with $q \equiv 1(mod \ 4)$ and $q\equiv 3(mod \ 4)$. The properties of conference matrix are used in both the constructions. In Paley construction type-I for each $q \equiv 3 \ (mod \ 4)$ there exist a Hadamard matrix of order $q+1$ and in that of type-II for each $q\equiv 1 \ (mod \ 4)$ there exist a Hadamard matrix of order $2(q+1)$.\\ 
\hspace{5cm} In this article we use the conference matrices $Q$ of order $q=p^r$ ( $p$ is an odd prime and $r \in \mathbb{Z}^+$) in different manner and construct Hadamard matrices of order $n(q+1)$ for $q\equiv 3(mod \ 4)$ and that of order $2^{k+1}(q+1)$ for $q\equiv 1(mod \ 4)$, where $n$ is the order of a known Hadamard matrix and $k$ is any non-negative integer. It has been observed that some of Hadamard matrices of order $16, \ 40, \ 56, \cdots $ cannot be constructed by Paley construction-I and construction-II directly. Our aim is to extend the Paley construction-I and II so that we can  construct Hadamard matrices of above mentioned orders directly. 
Also we will discuss about existence of Hadamard matrices using twin primes and unifiy in same construction.
\section {Preliminaries}
\begin{definition} \cite{mar}\textbf{Extended Quadratic character}\\
	\ \ \  Let $ q=p^r$, where $p$ is an odd prime,  $r$ is any positive integer and  $ GF(q) = \{0= \alpha_{0}  ,  \alpha_{1}  ,\alpha_{2}  ,  . . . , \alpha_{q-1} \} $. Then the \textbf{Extended quadratic character} is a map $\chi $ defined on $GF(q)$  as 
	\begin{equation*}
		\chi (\alpha_{i})=
		\begin{cases}
			\ \  \ 1 \ ;\ \ \ \ \ \  if \ \alpha_{i} \ is \ quadratic \ residue \ in \ GF(q);\\
			\ \  \ 0 \ ; \ \ \ \ \ \ if \  \alpha_{i} =0\ ; \ \ \ \ \\\
			-1 \ ; \ \ \ \ \ \  if \ \alpha_{i} \ is \ quadratic \ nonresidue \ in \  GF(q).
		\end{cases}\
	\end{equation*}
	
\end{definition}
\begin{definition}\label{def2.2}\cite{mar}\textbf{Quadratic Residue}\\
 An element $a$ of $GF(q)$ is said to be \textbf{ quadratic residue} if it is a perfect square in $GF(q)$ otherwise $a$ is a 
	\textbf{quadratic non-residue}.
\end{definition}
\begin{definition}\label{def2.3} \cite{hord}\textbf{Kronecker Product} \\
	Let $ A= [a_{ij} ] $ and   $B= [b_{rs}]$ be two square  matrices of order $m$ and $n$ respectively. Then the Kronecker Product (Direct Product ) of $A$ and $B$ is a square matrix of order $mn$ and is given  by\\
	\begin{equation*}
		A\otimes B=\left[ \begin{array}{ccccc}
			a_{11}B &  a_{12}B & \textbf{. . . } & a_{1m}B \\
			a_{21}B &  a_{22}B & \textbf{. . . } & a_{2m}B \\
			\textbf{.} &  & \ \ \ \ \textbf{.} & \textbf{.} \\
			\textbf{.} &  & \hspace{1.5cm} \textbf{.} & \textbf{.} \\
			\textbf{.} & & \hspace{2.2cm} \textbf{.}& \textbf{.} \\
			a_{m1}B &  a_{m2}B &\textbf{ . . .} & a_{mm}B \\
		\end{array}
		\right]\\
	\end{equation*}
\end{definition}
\begin{definition} \cite{sign} \textbf{Sign changes of a sequence } \\ 
	The number of sign changes of a finite sequence $\{a_{k}\}_{k=1}^{n}, a_{k}=\pm 1$ is the number obtained by counting the number of times $+1$ is followed by $-1$ and that of times $-1$ is followed by $+1$.
	\end{definition}
	\begin{definition}\cite{sign}\textbf{Sign spectrum of a Hadamard matrix}\\
		The row (column) sign spectrum of an $n \times n $ Hadamard matrix $H$ is the sequence of numbers of sign changes appear in its rows (columns).
		\end{definition}
		
		\begin{definition}\label{def2.3}\textbf{Conference Matrix}\\ A Conference Matrix is a square matrix of order $q$ with diagonal entries $0$ and off-diagonal entries $\pm 1$ which satisfies the condition $QQ^T=qI_{q} -J_{q}.$ 
			
		\end{definition}

\begin{lemma}\cite{mar} Let $F$ be a field then $\sum_{b}\chi(b)\chi(b+c)=-1$ if $c \neq 0$,  where $b,c \in F $
\end{lemma}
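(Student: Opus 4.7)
The plan is to exploit the multiplicativity of the extended quadratic character $\chi$ to collapse the product $\chi(b)\chi(b+c)$ into a single character evaluation, then use the fact that $\chi$ sums to zero over $F = GF(q)$.

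First, I would discard the vanishing terms: since $\chi(0)=0$, the indices $b=0$ and $b=-c$ contribute nothing, so the sum reduces to $\sum_{b \in F \setminus \{0,-c\}} \chi(b)\chi(b+c)$. Next, using that $\chi$ is a multiplicative character on $F^{*}$ with $\chi(b)^2 = 1$ for $b \neq 0$, I would rewrite
\begin{equation*}
\chi(b)\chi(b+c) \;=\; \chi(b)\,\chi(b)\,\chi(1 + cb^{-1}) \;=\; \chi(1 + cb^{-1}),
\end{equation*}
valid for every $b \in F \setminus \{0,-c\}$.

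Then I would perform a change of variables. Set $t = cb^{-1}$; since $c \neq 0$, as $b$ ranges over $F^{*} \setminus \{-c\}$ the variable $t$ ranges bijectively over $F^{*} \setminus \{-1\}$. Letting $u = 1+t$, the new variable $u$ ranges bijectively over $F \setminus \{0,1\}$. Therefore
\begin{equation*}
\sum_{b}\chi(b)\chi(b+c) \;=\; \sum_{u \in F \setminus \{0,1\}} \chi(u) \;=\; \Bigl(\sum_{u \in F}\chi(u)\Bigr) - \chi(0) - \chi(1) \;=\; 0 - 0 - 1 \;=\; -1,
\end{equation*}
where I use $\chi(1)=1$ (since $1$ is a square) and the standard fact that $\sum_{u \in F}\chi(u) = 0$, which holds because $GF(q)$ contains equally many nonzero squares and non-squares, namely $(q-1)/2$ of each.

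There is no real obstacle here; the only delicate point is justifying the multiplicativity $\chi(xy) = \chi(x)\chi(y)$, which I would either cite as a standard property of the extended quadratic character (following from the fact that the nonzero squares form an index-$2$ subgroup of the cyclic group $F^{*}$) or verify case-by-case using Definition~2.2. Everything else is a bijective reindexing.
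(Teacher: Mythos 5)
Your proof is correct. The paper itself gives no proof of this lemma --- it is quoted from Hall's \emph{Combinatorial Theory} --- and your argument (writing $\chi(b)\chi(b+c)=\chi(1+cb^{-1})$ via multiplicativity, reindexing so the sum becomes $\sum_{u\neq 1}\chi(u)$, and using $\sum_{u\in F}\chi(u)=0$) is precisely the standard one found in that reference; the only point worth making explicit, as you note, is the multiplicativity of $\chi$ on $F^{*}$, which follows from the nonzero squares forming an index-$2$ subgroup of the cyclic group $F^{*}$.
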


\begin{lemma}\cite{mar}\label{lemma 2.2}
	The matrix Q = $ [q_{ij}]$ of order $q \times q$ where $q_{ij}=\chi(\beta_{j}-\beta_{i})$, $i, j =0,1,2,...,q-1$
	has the following properties\\
	(a) Q is symmetric if $ q\equiv 1\ (mod \ 4)$ 
	and  skew symmetric if $ q \equiv 3\ (mod \ 4)$\\
	(b) $QQ^{T}$ = $qI_{q}- J$\\  
\end{lemma}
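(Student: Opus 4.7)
The plan is to prove the two parts of Lemma \ref{lemma 2.2} separately, both reducing to properties of the extended quadratic character $\chi$ on $GF(q)$.

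For part (a), the starting observation is that $\beta_i-\beta_j=-(\beta_j-\beta_i)$, so by multiplicativity of $\chi$ we get $q_{ji}=\chi(\beta_i-\beta_j)=\chi(-1)\chi(\beta_j-\beta_i)=\chi(-1)\,q_{ij}$. Thus symmetry versus skew-symmetry is entirely controlled by the value of $\chi(-1)$. The standard fact I would invoke (or quickly prove) is that since $GF(q)^{*}$ is cyclic of order $q-1$, an element $a\in GF(q)^{*}$ is a quadratic residue if and only if $a^{(q-1)/2}=1$. Applied to $a=-1$, this gives $\chi(-1)=(-1)^{(q-1)/2}$, which equals $+1$ precisely when $q\equiv 1\pmod 4$ and $-1$ when $q\equiv 3\pmod 4$. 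Combined with the identity $q_{ji}=\chi(-1)q_{ij}$ and noting that diagonal entries $q_{ii}=\chi(0)=0$ are consistent with both symmetry types, part (a) follows.

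For part (b), I would compute the $(i,k)$-entry of $QQ^{T}$ directly:
\begin{equation*}
(QQ^{T})_{ik}=\sum_{j=0}^{q-1} q_{ij}q_{kj}=\sum_{j=0}^{q-1}\chi(\beta_{j}-\beta_{i})\,\chi(\beta_{j}-\beta_{k}).
\end{equation*}
Split into the diagonal and off-diagonal cases. When $i=k$ each term is $\chi(\beta_{j}-\beta_{i})^{2}$, which equals $1$ for $j\neq i$ and $0$ for $j=i$, giving the diagonal entry $q-1$. When $i\neq k$, set $b=\beta_{j}-\beta_{i}$ (so $b$ ranges over all of $GF(q)$ as $j$ varies) and $c=\beta_{k}-\beta_{i}\neq 0$; then $\beta_{j}-\beta_{k}=b-c$, and the sum becomes $\sum_{b\in GF(q)}\chi(b)\chi(b-c)$. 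Applying Lemma 2.1 (with $-c$ in place of $c$) yields $-1$. Hence $(QQ^{T})_{ik}=(q-1)\delta_{ik}-(1-\delta_{ik})=q\,\delta_{ik}-1$, which is exactly the $(i,k)$-entry of $qI_{q}-J$.

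The only genuinely non-trivial ingredient is the quadratic-residue status of $-1$ in $GF(q)$ used in part (a); once that is in hand, part (b) is essentially a bookkeeping computation whose real content is already packaged inside Lemma 2.1. I would therefore spend the bulk of the writing on a clean justification that $\chi(-1)=(-1)^{(q-1)/2}$ (either via the cyclic structure of $GF(q)^{*}$ or via Euler's criterion for finite fields), and treat the matrix-entry calculation as a short, index-driven verification.
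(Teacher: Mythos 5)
Your proof is correct. The paper does not actually prove this lemma—it is quoted from Hall's \emph{Combinatorial Theory} with only a citation—so there is no in-paper argument to compare against; your route (multiplicativity of $\chi$ together with $\chi(-1)=(-1)^{(q-1)/2}$ for part (a), and the entrywise computation reducing to Lemma 2.1 via the substitution $b=\beta_{j}-\beta_{i}$ for part (b)) is the standard one found in the cited source and is complete.
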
	
\section{Results:}
Consider $H_{n}=[h_{ij}]$ is a Hadamard matrix of order $n$. Let us define a rectangular matrix $E=[e_{ij}]$ of order $nq\times n$ whose elements are defined as follows
\begin{equation}{\label{eq 1}}
	e_{ij}=
	\begin{cases}
		h_{1j} & \hspace{5cm}if \ 1 \ \leq i \ \leq q;\\
		h_{2j} & \hspace{5cm} if  \ \ q< i \ \leq 2q\\
		
		. & \hspace{6cm} .\\
		. & \hspace{6cm}.\\
		. & \hspace{6cm}.\\
		h_{nj} & \hspace{5cm}  if \ (n-1)q < i \ \leq nq
		
	\end{cases}
\end{equation}
\begin{center}
	where  $j$= $1, 2, 3, \ . \ . \ . \ n$\\	
\end{center}

Infact, matrix $E=[e_{ij}]$ is formed by repeating each row of Hadamard matrix $H_{n}$ $q$ times successively. i.e. the first $q$ rows of $E$ are repetitions of $1^{st}$ row of $H_{n}$, $(q+1)^{th}$ row to $2q^{th}$ rows of $E$ are repetitions of $2^{nd}$ row of $H_{n}$ and so on.\\
Similarly the matrix $E'$ of order $n \times nq $ is  formed by repeating each column of Hadamard matrix $H_{n}$ $q$ times successively.
 \label{e}\\

Since rows and columns of Hadamard matrix is mutually orthogonal. So 
\begin{equation}{\label{eq:E'E'^T}}
E'E'^{T}=
\left[
\begin{array}{ccccccc}
nq && \ 0 & &  \cdots& & 0\\
0 && nq & & \cdots & & 0 \\
\\
\vdots & & &  \ddots &&&\vdots\\
\\
0& & &  \cdots  & && nq
\end{array}
\right]_{n\times n}=nqI_{n}
\end{equation}\\
and,
\begin{equation} {\label{eq:H_{n}E^T}}
H_{n}E^T =	\left[
\begin{array}{cccccccccc}

\smash[b]{\block{q}} && 0 &  & \cdots  && 0  \\
\\
0& \smash[b]{\block{q}}& &&&& 0 \\
&&&&\ddots &\\
\vdots &&&&&& \vdots\\
0&&&&&&\block{q}\\

\end{array}
\right]_{n \times nq}
\end{equation}




Similarly
\begin{equation} \label{eq: EH_{n}^T}
EH_{n}^T=(H_{n}E^T)^T
\end{equation}
and
\begin{equation}{\label{eq:EE^T}}
EE^T=
\left[
\begin{array}{ccccccccc}
nJ_{q} & 0 &\cdots && 0\\
0 & nJ_{q} & \cdots && 0\\
\vdots && \ddots && \vdots \\
0 && \cdots &&nJ_{q}
\end{array}
\right]_{nq \times nq} = nJ_{q}\otimes I_{n}
\end{equation}
where $J_{q}$ is a square matrix of order $q$ with all entries $1$.

\begin{theorem}\label{theorem:first}Let $q =p^r$, where $p$ is an odd prime and $r$ is any positive integer, such that $q\equiv 3 \ ( mod \ 4)$, then there exist a Hadamard matrix of order $nq+n$, where $n$ is the order of a known Hadamard Matrix.
 
\end{theorem}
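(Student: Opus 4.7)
My plan is to build the required Hadamard matrix as the $2\times 2$ block matrix of order $n+nq=n(q+1)$,
\[
H=\begin{pmatrix} H_n & E' \\ -E & H_n\otimes (Q+I_q) \end{pmatrix},
\]
where $Q$ is the order-$q$ conference matrix from Lemma \ref{lemma 2.2}. Because $q\equiv 3\pmod 4$, Lemma \ref{lemma 2.2}(a) says $Q$ is skew-symmetric with $0$ on the diagonal and $\pm1$ off it; consequently $Q+I_q$ has all entries in $\{-1,+1\}$, and hence so does $H$.

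Next I would verify $HH^T=n(q+1)I_{n(q+1)}$ block by block. The top-left block reduces by orthogonality of $H_n$ together with equation (2) to $H_nH_n^T+E'E'^T=nI_n+nqI_n=n(q+1)I_n$. For the bottom-right block, equation (5) and the mixed-product property of the Kronecker product give
\[
EE^T+\bigl(H_n\otimes (Q+I_q)\bigr)\bigl(H_n\otimes (Q+I_q)\bigr)^T = nI_n\otimes J_q + nI_n\otimes (Q+I_q)(Q+I_q)^T,
\]
and the hinge identity $(Q+I_q)(Q+I_q)^T=(q+1)I_q-J_q$ collapses this to $n(q+1)I_{nq}$. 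That identity is obtained by expanding and using Lemma \ref{lemma 2.2}(b) together with $Q^T=-Q$: $(Q+I)(-Q+I)=I-Q^2=I+QQ^T=(q+1)I-J$.

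For the cross blocks I would invoke equation (3) in the form $H_nE^T=n(I_n\otimes \mathbf{1}_q^T)$, where $\mathbf{1}_q$ denotes the all-ones column of length $q$. By Kronecker,
\[
E'\bigl(H_n\otimes (Q+I_q)\bigr)^T = (H_nH_n^T)\otimes \bigl(\mathbf{1}_q^T(-Q+I_q)\bigr),
\]
and the row-sum identity $Q\mathbf{1}_q=\mathbf{0}$ --- immediate from the definition of the extended quadratic character, since each row of $Q$ carries $(q-1)/2$ entries equal to $+1$ and $(q-1)/2$ entries equal to $-1$ --- yields $\mathbf{1}_q^T(-Q+I_q)=\mathbf{1}_q^T$. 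Thus this term equals $n(I_n\otimes \mathbf{1}_q^T)$ and exactly cancels $H_nE^T$ in the off-diagonal block $-H_nE^T+E'Y^T$; the lower-left block is handled by taking transposes.

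The main obstacle, and the only place where the congruence hypothesis is truly used, is the hinge identity $(Q+I_q)(Q+I_q)^T=(q+1)I_q-J_q$: its proof requires the cross-term $Q+Q^T$ to vanish, which is precisely skew-symmetry of $Q$ and therefore needs $q\equiv 3\pmod 4$. When $q\equiv 1\pmod 4$ the matrix $Q$ is symmetric and this identity fails, which is why a different choice of the bottom-right block (with a corresponding factor $2^{k+1}$ in the order) is required in the companion theorem for that case.
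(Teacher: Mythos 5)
Your construction is correct and is essentially the paper's own proof: the same bordered block matrix $\left[\begin{smallmatrix} \pm H_n & E' \\ \mp E & (Q+I_q)\text{-}H_n \text{ Kronecker block}\end{smallmatrix}\right]$, the same hinge identity $(Q+I_q)(Q+I_q)^T=(q+1)I_q-J_q$ obtained from $QQ^T=qI_q-J_q$ and skew-symmetry, and the same cancellation of the cross blocks via the vanishing row/column sums of $Q$. The only deviations are equivalence-preserving cosmetic ones --- your sign placement, and your Kronecker order $H_n\otimes(Q+I_q)$ instead of the paper's $(Q+I_q)\otimes H_n$, which if anything matches the row-repetition structure of $E$ and $E'$ more consistently than the paper's literal formulas do.
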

\begin{proof}
Consider a square matrix \begin{equation*}
A=[Q+I]\otimes H_{n}		
\end{equation*}
 of order $nq$, where $Q$ is a conference matrix of order $q$.\\ 
 Then we see that 
\begin{equation*}
K= \left[
\begin{array}{cc}
-H_{n} & E'\\
E & A	
\end{array}
\right]	
	\end{equation*}
	forms a Hadamard matrix of order $nq+n$.   

	 We have 
\begin{align*}
	KK^{T} &=\left[
	\begin{array}{cc}
		-H_{n} & E'  \\
		E & A
	\end{array}
	\right] \left[
	\begin{array}{cc}
		-H_{n} & E'\\
		E & A\\
	\end{array}
	\right]^{T}\\
	&=\left[
	\begin{array}{cc}
		-H_{n} & E^{T}  \\
		E & A
	\end{array}
	\right] \left[
	\begin{array}{cc}
		-H_{n}^{T} & E^{T}  \\
		E'^{T} & A^{T}
	\end{array}
	\right]
\end{align*}
\begin{align}\label{eq:KK^T1}
 \hspace{2.7cm} &	= \left [
	\begin{array}{cc}
		H_{n}H_{n}^{T}+E'E'^{T} &  \ \ -H_{n}E^T+E'A^{T}\\
		-EH_{n}^{T}+AE'{T} & \ \ EE^{T}+AA^{T}
	\end{array}
	\right] 
\end{align}
 
Now \begin{align*}
AA^T &=([Q+I]\otimes H_{n}) \ ([Q+I]\otimes H_{n})^T\\
&=([Q+I]\otimes H_{n}) \ ([Q^T+I]\otimes H_{n}^T)\\
&=([QQ^T+Q+Q^T+I]) \otimes H_{n}H_{n}^T\\
& =([QQ^T+Q+Q^T+I])\otimes nI_{n}\\
&=([QQ^T+I])\otimes nI_{n} \hspace{3cm} \mbox{As, $Q^T=-Q$ for $q\equiv 3(mod \ 4)$} \\
&=([qI_{q}-J_{q}+I_{q}]) \otimes nI_{n}	\\
&=((q+1)I_{q}-J_{q})\otimes nI_{n}	\\
 & = n(q+1)I_{q}\otimes I_{n}-nJ_{q}\otimes I_{n}
\end{align*}
As \begin{equation*}
EE^T=nJ_{q}\otimes I_{n}
\end{equation*}
So,
\begin{equation*}
 AA^T+EE^T= n(q+1)I_{q}\otimes I_{n}-nJ_{q}\otimes I_{n}+nJ_{q}\otimes I_{n}	
\end{equation*}
\begin{equation}\label{eq:AA^T+EE^T1}
\Rightarrow AA^T+EE^T= n(q+1)I_{q}\otimes I_{n}
\end{equation}

 As the number of quadratic residues and that of quadratic non-residues in $GF(q)$ are same which is $\frac{q-1}{2}$ by the definition of $Q$ the number of $+1's$ and $-1's$ in each row and column of matrix $Q+I$ are $\frac{q-1}{2}+1$  and $\frac{q-1}{2}$ respectively. Therefore
\begin{equation}\label{eq:E'A^T1}
	E'A^{T}=
\def\onegroup{\underbrace{n \hskip\arraycolsep\cdots \hskip\arraycolsep n}_{q}}
	\left[
	\begin{array}{cccccc}
	\onegroup & &&  &&  \\
	&& \onegroup && & \\
\\ 	&& &\ddots && \\
\\ 	&&  &&& \onegroup
 \end{array}
  \right]_{n \times nq}
\end{equation}

 		
 		
 		

 Using equation \eqref{eq:H_{n}E^T} and equation \eqref{eq:E'A^T1}
 \begin{equation}\label{eq:E'A^T-H_{n}E^T1}
 E'A^T-H_{n}E^T= \textbf{0}
 \end{equation}
 Taking transpose both sides\\
 \begin{equation}\label{eq:AE'^T-EH_{n}^T1}
 AE'^T-EH^T_{n}= \textbf{0}
  \end{equation}
From the equation \eqref{eq:E'E'^T} and Hadamard matrix $H_{n}$  we have\\
\begin{equation}\label{eq:H_{n}H_{n}^T+E'E'^T1}
H_{n}H_{n}^T+E'E'^T=nI_{n}+nqI_{n}=(nq+n)I_{n}
\end{equation}
Combining equation \eqref{eq:AA^T+EE^T1}, \eqref{eq:E'A^T-H_{n}E^T1}, \eqref{eq:AE'^T-EH_{n}^T1} , \eqref{eq:H_{n}H_{n}^T+E'E'^T1} equation \eqref{eq:KK^T1} becomes,
\begin{equation*}
	KK^T=
	\left[
	\begin{array}{ccccc}
		nq+n&0&\cdots& \ \ 0\\
		\ \ 0&nq+n&\cdots& \ \ 0\\
		\vdots&& \ddots& \ \ \vdots\\
		0& \ \ \cdots&&nq+n
	
	\end{array}
	\right] = (nq+n)I_{nq+n}
\end{equation*}
Therefore $K$ is a Hadamard matrix of order $nq+n$. \end{proof}
\textbf{Remark:} In particular this method coincides with Paley construction-I for $n=1$ 
\begin{theorem}\label{thm2.1}Let $q =p^r$, where $p$ is an odd prime and $r$ is any positive integer, such that $q\equiv 1 \ ( mod \ 4)$, then there exist a Hadamard matrix of order $2^{k+1}(q+1)$, where $k$ is any non-negative integer.
	\end{theorem}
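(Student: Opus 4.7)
The plan is to adapt the block construction of Theorem \ref{theorem:first} to the case $q \equiv 1 \pmod 4$. The direct substitution $A = (Q+I) \otimes H_n$ now fails because Lemma \ref{lemma 2.2}(a) gives $Q^T = Q$, so
\[
(Q+I)(Q+I)^T = QQ^T + Q + Q^T + I = (q+1)I_q - J_q + 2Q,
\]
which carries an extra $2Q$ term that vanished in the skew case. To neutralize it, I would couple $Q+I$ with $Q-I$ through a Paley~II-style $2 \times 2$ block skeleton, exploiting that $(Q-I)(Q-I)^T = (q+1)I_q - J_q - 2Q$ carries the opposite sign.

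Concretely, let $m = 2^k$ and let $H_m$ denote the Sylvester Hadamard matrix of order $m$. First, form the bordered symmetric conference matrix
\[
C = \begin{pmatrix} 0 & \mathbf{1}^T \\ \mathbf{1} & Q \end{pmatrix}
\]
of order $q+1$; using Lemma \ref{lemma 2.2}(b) and the fact that every row of $Q$ sums to $0$ (equal numbers of quadratic residues and non-residues in $GF(q)$), one checks $CC^T = qI_{q+1}$ and $C^T = C$. Then define
\[
K = \begin{pmatrix} (C+I_{q+1}) \otimes H_m & (C-I_{q+1}) \otimes H_m \\ (C-I_{q+1}) \otimes H_m & -(C+I_{q+1}) \otimes H_m \end{pmatrix},
\]
a $\pm 1$ matrix of order $2m(q+1) = 2^{k+1}(q+1)$, as required.

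To show $KK^T = 2^{k+1}(q+1)\, I$ the key identities are, first,
\[
(C+I)(C+I)^T + (C-I)(C-I)^T = 2C^2 + 2I = 2(q+1)I_{q+1},
\]
and, second,
\[
(C+I)(C-I)^T = (C-I)(C+I)^T = C^2 - I,
\]
so the off-diagonal blocks cancel. Kroneckering with $H_m H_m^T = m I_m$ via the mixed-product property then produces $2m(q+1) I$ on the diagonal of $KK^T$ and $0$ off the diagonal, completing the verification.

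The main obstacle will be verifying that the bordering $C$ genuinely yields a symmetric conference matrix of order $q+1$; once this is in place, the rest reduces to a routine Kronecker-product calculation. The factor of $2$ in $2^{k+1}$ arises from the outer Paley~II skeleton, while the remaining $2^k$ is supplied by the Sylvester factor $H_m$ in the Kronecker product.
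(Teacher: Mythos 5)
Your construction is correct, but it is a genuinely different route from the paper's. You border $Q$ to the symmetric conference matrix $C$ of order $q+1$ and then apply the classical Paley~II skeleton, tensored with a Sylvester matrix: indeed your
$K=\left[\begin{smallmatrix} C+I & C-I\\ C-I & -(C+I)\end{smallmatrix}\right]\otimes H_{2^k}$
can be rewritten as $\bigl(H_2\otimes C+W\otimes I_{q+1}\bigr)\otimes H_{2^k}$ with $W=\left[\begin{smallmatrix}1&-1\\-1&-1\end{smallmatrix}\right]$, which is (up to a perfect-shuffle permutation) exactly the standard Paley~II Hadamard matrix of order $2(q+1)$ Kroneckered with $H_{2^k}$; your verification of $CC^T=qI_{q+1}$, of the diagonal identity $(C+I)^2+(C-I)^2=2(q+1)I$, and of the cancellation of the off-diagonal blocks is sound, so the existence claim follows. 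The paper instead keeps the unbordered $Q$ and builds
$A=\bigl[Q\otimes(\pm RH_2R)+I_q\otimes H_2\bigr]\otimes H_{2^k}$,
then embeds it in the bordered block matrix
$K=\left[\begin{smallmatrix}-H_n & E'\\ E & A\end{smallmatrix}\right]$
with the row/column-repetition matrices $E,E'$ of Theorem~\ref{theorem:first}; the extra $2Q$ term you identify is killed there by the skew-symmetry of $RH_2RH_2^T$ rather than by pairing $C+I$ with $C-I$. Both arguments prove the stated existence, and yours is arguably shorter and more classical; what the paper's heavier construction buys is precisely that its output is \emph{not} (claimed to be) equivalent to a Kronecker product of a Paley matrix with a Sylvester matrix, which is the point of the Observation section --- your matrix is by design such a product, so it would not support the paper's non-isomorphism claims, though it fully suffices for Theorem~\ref{thm2.1} as stated.
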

\begin{proof}
	Consider a square matrix
	\begin{equation}
	A=\left[Q_{q}\otimes\pm (RH_2R)+I_{q}\otimes H_2\right]\otimes H_{2^{k}}
	\end{equation}
	where \begin{equation*}
	H_{2^k}= \underbrace{H_{2}\otimes H_{2}\otimes \cdots \otimes H_{2}}
	\end{equation*}  and $Q$ is a conference matrix of order $q$. Then we see that
	\begin{equation*}
	K=	\left[
	\begin{array}{cc}
	-H_{n} & E'\\
	E & A
	\end{array}
	\right]
	\end{equation*} is a Hadamard matrix of order $2^{k+1}(q+1)$, where $H_{n}=H_{2^k}\otimes H_{2}$.\\ 
	 
  We have 
\begin{align*}
KK^T &=	\left[
\begin{array}{cc}
	-H_{n} & E'\\
	E & A	
\end{array}
\right]
\left[\begin{array}{cc} -H_{n} & E'\\
	E & A
	\end{array}
\right]^T\\
&= \left[ \begin{array}{cc}
	-H_{n} & E'\\
	E & A \end{array}
\right] 
\left[ \begin{array}{cc}
	-H_{n}^T & E^T\\
	E'^{T} & A^T
	\end{array}
\right]
\end{align*}
\begin{align} \label{eq:KK^T2}
\hspace{2.3cm}&= \left[ \begin{array}{cc}
	H_{n}H_{n}^T+E'E'^{T} & -H_{n}E^T+E'A^T\\
	-EH_{n}^T+AE'^{T} & EE^T+AA^T
	\end{array}
\right]
\end{align} 
Calculations of $H_{n}H_{n}^T+E'E'^{T}, -H_{n}E^T+E'A^T, -EH_{n}^T+AE'^{T}$ and $EE^T$ are similar to the theorem \eqref{theorem:first}. To show the result it is enough to calculate $AA^T$. \\ 
Here 
\begin{align*}
AA^T& = \left( \left[Q\otimes(\pm RH_{2}R)+ I\otimes H_{2}\right]\otimes H_{2^k}\right)\left(  \left[Q\otimes(\pm RH_{2}R)+ I\otimes H_{2}\right]\otimes H_{2^k} \right)^T\\
&=\left(Q \otimes (\pm RH_{2}R)\otimes H_{2^k}+I_{q}\otimes H_{2}\otimes H_{2^k} \right)\left(Q^T \otimes(\pm R^TH_{2}^TR^T)\otimes H_{2^k}^T+I_{q}\otimes H_{2}^T\otimes H_{2^{k}}^T\right)\\
&=\left(Q \otimes (\pm RH_{2}R)\otimes H_{2^k}\right)\left(Q^T\otimes (\pm R^TH_{2}^TR^T)\otimes H_{2^k}^T\right)+ \left(Q\otimes (\pm RH_{2}R)\otimes H_{2^k}\right)\left(I_{q}\otimes H_{2}^T\otimes H_{2^k}^T\right)\\
& \ \ \ \ \ \ \ \  +\left(I_{q}\otimes H_{2}\otimes H_{2^k}\right)\left(Q^T\otimes(\pm R^TH_{2}^TR^T)\otimes H_{2^k}^T\right)+ \left(I_{q}\otimes H_{2}\otimes H_{2^k}\right)\left(I_{q}\otimes H_{2}^T\otimes H_{2^k}^T\right)\\
&=\left(Q\otimes (\pm RH_{2}R)\right)\left(Q^T\otimes(\pm R^TH_{2}^TR^T)\right)\otimes H_{2^k}H_{2^k}^T+ \left(Q\otimes (\pm RH_{2}R)\right)\left(I_{q}\otimes H_{2}^T\right)
\otimes H_{2^k}H_{2^k}^T\\
&\ \ \ \ \ \ \ \  +\left(I_{q}\otimes H_{2}\right)\left(Q^T\otimes (\pm R^TH_{2}^TR^T)\right)\otimes H_{2^k}H_{2^k}^T +\left(I_{q}\otimes H_{2}\right)\left(I_{q}\otimes H_{2}^T\right)\otimes H_{2^k}H_{2^k}^T\\
&=\left(QQ^T\otimes RH_{2}RR^TH_{2}R^T\right)\otimes H_{2^k}H_{2^k}^T+\left(Q\otimes \pm RH_{2}RH_{2}^T\right)\otimes H_{2^k}H_{2^k}^T\\
& \ \ \ \ \ \ \ \ +\left(Q^T\otimes \pm H_{2}R^TH_{2}^TR^T\right)\otimes H_{2^k}H_{2^k}^T+
 I_{q}\otimes H_{2}H_{2}^T\otimes H_{2^k}H_{2^k}^T\\
 &=\left(QQ^T\otimes  RH_{2}(RH_{2})^T\right)\otimes 2^kI_{2^k}+\left(Q\otimes \pm RH_{2}RH_{2}^T\right)\otimes 2^kI_{2^k}+\left(Q^T\otimes \pm (RH_{2}RH_{2}^T)^T\right)\otimes 2^kI_{2^k}\\
 &\ \ \ \ \ \ \ \ +I_{q}\otimes 2I_{2}\otimes 2^kI_{2^k}\\
 &=\left((qI_{q}-J_{q})\otimes  2I_{2}\right)\otimes 2^kI_{2^k}+\left(Q\otimes \pm RH_{2}RH_{2}^T\right)\otimes 2^kI_{2^k} +\left(Q\otimes \pm (RH_{2}RH_{2}^T)^T \right)\otimes 2^kI_{2^k} +2^{k+1}I_{2^{k+1}q} \\
 & \hspace{4cm}\mbox{As $Q^T=Q$ for $q \equiv 1(mod \ 4)$}\\
 &=\left(2qI_{q}\otimes I_{2}-2J_{q}\otimes I_{2}\right)\otimes 2^kI_{2^k}+\left(Q\otimes \pm RH_{2}RH_{2}^T\right)\otimes 2^kI_{2^k}- \left(Q\otimes \pm RH_{2}RH_{2}^T\right)\otimes 2^kI_{2^k}+2^{k+1}I_{2^{k+1}q}\\
 & \hspace{4cm}\mbox{It can be easily verified that $(RH_{2}RH_{2}^T)^T=-RH_{2}RH_{2}^T$}\\
 &= 2^{k+1}qI_{2^{k+1}q}-2^{k+1}J_{q}\otimes I_{2^{k+1}}+ 2^{k+1}I_{2^{k+1}q}
\end{align*}
 As, here $H_{n}=H_{2^{k+1}}$ so using equation \eqref{eq:EE^T} \begin{equation*}
 EE^T=2^{k+1}J_{q}\otimes I_{2^{k+1}}
 \end{equation*}
 Thus \begin{align}
 AA^T+EE^T &= 2^{k+1}qI_{2^{k+1}q}-2^{k+1}J_{q}\otimes I_{2^{k+1}}+ 2^{k+1}I_{2^{k+1}q}+ 2^{k+1}J_{q}\otimes I_{2^{k+1}} \nonumber\\
 &= 2^{k+1}qI_{2^{k+1}q}+2^{k+1}I_{2^{k+1}q} \nonumber\\
 &=2^{k+1}(q+1)I_{2^{k+1}q}
 \end{align}

 Combining all the results equation \eqref{eq:KK^T2} becomes  $KK^T=2^{k+1}(q+1)I_{2^{k+1}(q+1)}$. Thus $K$ is a Hadamard matrix of order $2^{k+1}(q+1)$. \end{proof}

\begin{theorem} Let $p$ and $q$ are twin primes. Then there exist a Hadamard matrix of order $npq+n$, where $n$ is the order of a known Hadamard matrix.	
\end{theorem}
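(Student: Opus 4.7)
The plan is to reduce to Theorem \ref{theorem:first} by replacing the prime-power conference matrix with a ``twin-prime conference matrix'' $\widetilde Q$ of order $pq$. Since $p$ and $q$ are distinct odd twin primes, exactly one of them is $\equiv 1$ and the other $\equiv 3\pmod 4$, so $pq\equiv 3\pmod 4$ and we are in the Paley type-I regime. The only thing needed is a matrix $\widetilde Q$ of order $pq$ with entries in $\{0,\pm 1\}$, zero diagonal, satisfying
$$\widetilde Q^T=-\widetilde Q,\qquad \widetilde Q\,\widetilde Q^T=pq\,I_{pq}-J_{pq},$$
and with each row of $\widetilde Q+I_{pq}$ summing to $1$; once this is in hand, every calculation in Theorem \ref{theorem:first} transfers with $q\mapsto pq$.

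The matrix $\widetilde Q$ is the development, in the additive group $\mathbb{Z}_p\times\mathbb{Z}_q$, of a Stanton--Sprott twin-prime difference set. I would exhibit a subset $D\subset(\mathbb{Z}_p\times\mathbb{Z}_q)\setminus\{0\}$ of size $(pq-1)/2$, built from the quadratic characters $\chi_p$ and $\chi_q$ with a careful choice of signs on the ``axes'' $\{0\}\times\mathbb{Z}_q^*$ and $\mathbb{Z}_p^*\times\{0\}$, establishing two key properties: (i) the skew property $D\sqcup(-D)=(\mathbb{Z}_p\times\mathbb{Z}_q)\setminus\{0\}$, which reduces to the sign identity $\chi_p(-1)\chi_q(-1)=-1$ (valid precisely because one twin prime sits in each residue class modulo $4$); and (ii) the difference-set count $\lambda=(pq-3)/4$, which factors as a product of one-dimensional character sums over $GF(p)$ and $GF(q)$, each of which is evaluated by Lemma~2.1 on the corresponding factor field. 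Setting $\widetilde Q:=2N_D-(J_{pq}-I_{pq})$, where $N_D$ is the zero-one development matrix of $D$, the skew property of $D$ gives $\widetilde Q^T=-\widetilde Q$; the difference-set identity $N_DN_D^T=\tfrac{pq+1}{4}I+\tfrac{pq-3}{4}J$, combined with $N_D+N_D^T=J-I$ (again from the skew property), yields $\widetilde Q\widetilde Q^T=pq\,I_{pq}-J_{pq}$; and $|D|=(pq-1)/2$ gives row sum $0$ for $\widetilde Q$ and hence $1$ for $\widetilde Q+I_{pq}$.

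With $\widetilde Q$ in hand, define $A:=(\widetilde Q+I_{pq})\otimes H_n$ and
$$K:=\left[\begin{array}{cc}-H_n & E'\\ E & A\end{array}\right],$$
where $E$ and $E'$ are built from $H_n$ as in equation \eqref{eq 1} but with the row/column repetition factor equal to $pq$ instead of $q$. The identities \eqref{eq:E'E'^T}--\eqref{eq:EE^T} depend only on the orthogonality of $H_n$ and on the repetition pattern, so they hold verbatim with $q\mapsto pq$. The row balance of $\widetilde Q+I_{pq}$ forces $E'A^T=H_nE^T$, killing the off-diagonal blocks of $KK^T$; the computation of $AA^T$ invokes only $\widetilde Q^T=-\widetilde Q$ and $\widetilde Q\widetilde Q^T=pq\,I-J$, so it runs word-for-word as in Theorem \ref{theorem:first} and yields $AA^T+EE^T=n(pq+1)I_{npq}$. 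Combined with $H_nH_n^T+E'E'^T=(npq+n)I_n$, this gives $KK^T=(npq+n)I_{npq+n}$, so $K$ is a Hadamard matrix of order $npq+n$. The main obstacle is therefore the construction of the Stanton--Sprott difference set $D$, with its skew and $\lambda$-parameters; once $D$ (and hence $\widetilde Q$) is available, the remainder of the argument is a direct transcription of the proof of Theorem \ref{theorem:first}.
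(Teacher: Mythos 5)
Your overall architecture (bordering a Kronecker-product core by $E$ and $E'$) matches the paper's, but the core itself is built differently, and that difference is precisely where your argument breaks. The paper does \emph{not} manufacture a skew conference matrix of order $pq$: it takes the ordinary (non-skew) twin-prime difference set, forms from it the $(-1,1)$ matrix $R$ of order $pq$ with $RR^T=(pq+1)I_{pq}-J_{pq}$ and constant row sum $-1$, and sets $B=R\otimes H_n$ with a $+H_n$ (not $-H_n$) corner block to match that row sum. Because $B$ is a single Kronecker product rather than a sum of two terms, $BB^T=RR^T\otimes nI_n$ produces no cross terms, so neither symmetry nor skew-symmetry of $R$ is ever invoked; only the difference-set identity and the count of $\pm1$'s per row enter.

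Your route instead requires a zero-diagonal matrix $\widetilde Q$ of order $pq$ with $\widetilde Q^T=-\widetilde Q$ and $\widetilde Q\widetilde Q^T=pqI_{pq}-J_{pq}$, developed over $\mathbb{Z}_p\times\mathbb{Z}_q$ from a \emph{skew} Hadamard difference set. This object does not exist. The identity $\chi_p(-1)\chi_q(-1)=-1$ only makes the generic part $\mathbb{Z}_p^*\times\mathbb{Z}_q^*$ skew; on the axis attached to the twin prime that is $\equiv 1\pmod 4$, the element $-1$ is a quadratic residue, so the quadratic character cannot split that axis into $S\sqcup(-S)$, and no ad hoc choice of signs can repair this: if $D$ were a skew $\left(pq,\tfrac{pq-1}{2},\tfrac{pq-3}{4}\right)$-difference set in $\mathbb{Z}_p\times\mathbb{Z}_q$, then any nontrivial character $\psi$ of order $p$ would have to satisfy $\psi(D)=\tfrac{1}{2}\left(-1\pm\sqrt{-pq}\right)$, forcing $\sqrt{-pq}\in\mathbb{Q}(\zeta_p)$, whose unique quadratic subfield is $\mathbb{Q}\bigl(\sqrt{(-1)^{(p-1)/2}p}\bigr)$; that would make $\pm q$ a rational square, which is impossible. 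So step (i) of your construction of $\widetilde Q$ cannot be carried out, and with it the whole reduction to Theorem \ref{theorem:first} collapses. The repair is exactly the paper's move: drop the skewness requirement, use the plain twin-prime difference set to get $R$, put $B=R\otimes H_n$ in the core, and change the corner block to $+H_n$ so that $H_nE^T+E'B^T=\mathbf{0}$ follows from the row sum $-1$ of $R$.
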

\begin{proof}  Consider \begin{equation*}B= R \otimes H_{n},\end{equation*} where $R$ is a $(-1,1)$ conference matrix of order $pq$ i.e. $RR^T=(pq+1)I_{pq}-J_{pq}$. $R$ may be constructed using difference set defined on product of twin primes \cite{mar}. Then we see that
\begin{equation}\label{eq:K3}
K= \left[ \begin{array}{cc}
H_{n} & E'\\
E & B
\end{array}
\right]
\end{equation}
forms a Hadamard matrix of order $npq+n$, where $H_{n}$ is a known Hadamard matrix of order $n$.\\
We have 
\begin{align}\label{eq:KK^T3}
 KK^T&= \left[ \begin{array}{cc}
		H_{n}H_{n}^T+E'E'^{T} & \ \ \  H_{n}E^T+E'B^T\\
		EH_{n}^T+BE'^{T} & \ \ \ EE^T+BB^T
	\end{array}
	\right]
\end{align} 
Here $E=[e_{ij}]$ is a matrix of order $npq \times n$ defined in equation $(1)$. Therefore \begin{equation*} E'E'^{T}=npqI_{n} \ \ and \ \ EE^T= nJ_{pq} \otimes I_{n}\end{equation*}

In the context of construction of $R$ it can be observed that there are  $\frac{(pq-1)}{2}$ elements in difference set modulo $pq$ \cite{mar}. So total number of $1's$ and $-1's$in each row and  column of $R$ are $\frac{pq-1}{2}$ and $\frac{pq+1}{2}$ respectively. 

\begin{equation}\label{eq:E'B^T3}
E'B^{T}=
\def\onegroup{\underbrace{-n \hskip\arraycolsep\cdots \hskip\arraycolsep -n}_{pq}}
\left[
\begin{array}{cccccc}
\onegroup & &&  &&  \\
&& \onegroup && & \\
\\ 	&& &\ddots && \\
\\ 	&&  &&& \onegroup \end{array} \right]_{n \times npq}
\end{equation}
Now
\begin{align*}
	BB^T &= (R \otimes H_{n}) \ (R \otimes H_{n})^T\\
	& = (R \otimes H_{n}) \ (R^T \otimes H_{n}^T)\\
	& = RR^T \otimes H_{n}H_{n}^T\\
	& = ((pq+1)I_{pq}-J_{pq}) \otimes nI_{n}\\
			& = n(pq+1)I_{npq}-(nJ_{pq}\otimes I_{n})	
				\end{align*}
and	\begin{equation*}
EE^T=nJ_{pq} \otimes I_{n}	
\end{equation*}
So \begin{align*}
BB^T+EE^T& = n(pq+1)I_{npq}-(nJ_{pq}\otimes I_{n})+ (nJ_{pq}\otimes I_{n})	
\end{align*}
\begin{align}\label{eq:BB^T+EE^T}
&= n(pq+1)I_{npq}
\end{align}
Using equation \eqref{eq:H_{n}E^T} and equation \eqref{eq:E'B^T3} \begin{equation}\label{eq:H_{n}E^T+E'B^T}
H_{n}E^T+E'B^T=\textbf{0}
\end{equation}
Taking transpose on both sides
 \begin{equation}\label{eq:EH_{n}^T+BE'^T}
 EH_{n}^T+BE'^T=\textbf{0}
 \end{equation}
 And
 \begin{equation}\label{eq:E'E'^T+H_{n}H_{n}^T}
 E'E'^{T}+H_{n}H_{n}^T= (npq+n)I_{n}\\
  \end{equation} 
  Combining equation \eqref{eq:BB^T+EE^T}, \eqref{eq:H_{n}E^T+E'B^T}, \eqref{eq:EH_{n}^T+BE'^T} and \eqref{eq:E'E'^T+H_{n}H_{n}^T} the  equation \eqref{eq:KK^T3} becomes
 $$KK^T= (npq+n)I_{npq+n}$$ 
 i.e. $K$ is a Hadamard matrix of order $npq+n$. 
 \end{proof}

Now, We will compare the isomorphism of the Hadamard matrices obtained by these methods and well known existing methods. For this we introduce the following definition of strictly normalized Hadamard matix.

 A \textbf{strictly normalized Hadamard matrix} of a given normalized Hadamard matrix $H$ is obtained by implementing suitable rows and/or columns permutations on $H$ so that all possible $1's$ to be shifted towards north west corner successively starting from first row and first column without affecting the previous rows or columns.\\
\begin{Example} Let $H$ be a normalized Hadamard matrix of order $4$ constructed by Paley construction-I
and strictly normalized Hadamard matrix of $H$ is denoted by $H_{s}$
\begin{equation*}
H= \left[ \begin{array}{cccc}
	1 &  \ 1 & \ 1 &\ 1\\
	\ 1 & -1 & -1 & 1\\
	\ 1 & \ 1 & - 1 & - 1\\
	\ 1 & - 1 & \ 1 & - 1
	\end{array}
\right]	, H_{s} =\left[ \begin{array}{cccc}
\ 1 & \ 1 & \ 1 & \ 1\\
\ 1 & \ 1 & -1 & -1\\
\ 1 & -1 & \ 1 & -1\\
\ 1 &  -1 & -1 & \ 1
\end{array}
\right]
\end{equation*}
\end{Example}
 Strictly normalized Hadamard matrix $H_{s}$ obtained from Hadamard matrix $H$ is equivalent to $H$. Also
 it is clear that two Hadamard matrices are isomorphic if and only if the set of sign spectrum of their strictly normalized Hadamard matrices are same. \\

\section{Observation}
Using the concept of strictly normalized Hadamard matrix we demonstrate some examples of non-isomorphic Hadamard matrices as follows

\begin{Example}	{Hadamard matrix of order $24$}

 In our method, described in this article, Hadamard matrix of order $24$ can be obtained in two ways taking $q=5$ and $q=11$. Suppose  Hadamard matrix of order $24$ obtained by using prime $q=11$ is $H^{1}$ and the  sign spectrum of the strictly normalized Hadamard matrix of $H^{1}$ is\\ 
$S(H_{1})= \{\{0, \ 1, \ 3, \ 2, \  7, \ 6, \  15, \ 14, \ 17, \ 18, \ 12, \ 13, \ 12, \ 13, \ 13, \ 14, \ 16, \ 15, \ 14, \ 13, \ 14, \ 15, \ 15, \ 14  \},\\
 \{0, \ 1, \ 3, \ 7, \ 8, \ 6, \ 6, \ 7, \ 8, \ 8, \ 7, \ 5, \ 23,  \ 20, \ 21, \ 18, \ 14, \ 19, \ 16, \ 15,  \ 14, \ 15, \ 17,  18  \} \}$.\\
 $H^{2}$ is the Hadamard matrix of order $24$ obtained by the method using prime $q=5$ and  the  sign spectrum of the strictly normalized Hadamard matrix of  $H^{2}$ is\\
 $S(H^{2})=\{ \{ 0,\ 1, \ 3, \ 7, \ 12, \ 13, \ 16, \ 12, \ 11, \ 10, \ 11, \ 10, \ 16, \ 10, \ 14, \ 13, \ 14, \ 13, \ 16, \ 17, \ 17, \ 17,  14, \ 9   \},\\
 \{0, \ 1, \ 3, \ 7, \ 11, \ 14, \ 9, \ 16, \ 14, \ 14, \ 13, \ 14, \ 18, \ 12, \ 14, \ 14, \ 15, \ 11, \ 12, \ 12, \ 13, \ 13, \ 11, \ 15 \} \}$\\
 $H^3$ is a Hadamard matrix of order $24$ obtained by Paley construction-I. The  sign spectrum of the strictly normalized Hadamard matrix of $H^{3}$ is \\
 $S(H^{3})= \{\{0, \ 1, \ 3, \ 7, \ 12, \ 13, \ 16, \ 12, \ 12, \ 13, \ 15, \ 10, \ 12, \ 14, \ 15, \ 14, \ 16, \ 13, \ 11, \ 14, \ 13, \ 15, \ 14, \ 11 \},\\
  \{ 0, \ 1, \ 3, \ 7, \ 12, \ 13, \ 13, \ 14, \ 10, \ 9, \ 14, \ 10, \ 14, \ 14, \ 17, \ 14, \ 15, \ 16, \ 14, \ 18, \ 13, \ 11, \ 11, \ 13 \}\} $\\
   $H^4$ is the Hadamard matrix of order $24$ obtained by Kronecker product of $H_{2}$ and $H_{12}$, where $H_{12}$ is constructed by Paley-I. Then sign spectrum of the strictly normalized $H^{4}$ is\\
$ S(H^{4})= \{\{0, \ 1, \ 3, \ 7, \ 13, \ 18, \ 14, \ 13, \ 12, \ 12, \ 12, \ 11, \ 14, \ 11, \ 12, \ 14, \ 14, \ 15, \ 14, \ 13, \ 15, \ 11, \ 17, \ 10\},\\
\{0, \ 1, \ 3, \ 7, \ 4, \ 5, \ 15, \ 18, \ 12, \ 12, \ 18, \ 11, \ 15, \ 12, \ 14, \ 18, \ 16, \ 15, \ 16, \ 15, \ 13, \ 13, \ 11, \ 12\} \}$.\\
$H^{5}$ is the Hadamard matrix of order $24$  obtained by Kronecker product of $ H_{2}$ and $H_{12}$, where $H_{12}$ is constructed by Paley-II, The  sign spectrum of strictly normalized Hadamard matrix of  $H^{5}$ is \\
$S(H^{5})=\{ \{ 0, \ 1, \ 3, \ 7, \ 6, \ 4, \ 15, \ 16, \ 14, \ 8, \ 19, \ 9, \ 14, \ 15, \ 16, \ 17, \ 16, \ 15, \ 16, \ 15, \ 13, \ 11, \ 12, \ 14\},\\
\{ 0, \ 1, \ 3, \ 7, \ 6, \ 4, \ 15, \ 16, \ 14, \ 8, \ 19, \ 9, \ 14, \ 16, \ 17, \ 17, \ 15, \ 14, \ 15, \ 14, \ 13, \ 12, \ 13, \ 14\} \}$\\

We observe that the sets of sign spectrum of $S(H^{1}), S(H^{2}), S(H^{3}), S(H^{4}), S(H^{5})$ are different.
So that we can say that the Hadamard matrix of order $24$ obtained by our method is non-isomorphic to the previously constructed Hadamard matrix of order $24$.\\
\end{Example}
\begin{Example}

 Hadamard matrix of order $28$\\
It can be constructed by Paley-I, Paley-II and our method.
$H^{1}$ is the Hadamard matrix of order $28$ obtained by paley construction-I. The sign spectrum of Strictly normalized Hadamard matrix $H^{1}$ is \\
$S(H^{1})=\{ \{0, \ 1, \ 3, \ 7, \ 13, \ 14, \ 14, \ 14, \ 20, \ 19, \ 14, \ 17, \ 18, \ 15, \ 15, \ 14, \ 16, \ 14, \ 16, \ 15, \ 15, \ 18, \\ 14, \ 19, \ 15, \ 15, \ 15, \ 8  \},\\
 \{0, \ 1, \ 3, \ 7, \ 11, \ 14, \ 14, \ 18, \ 16, \ 16, \ 11, \ 17, \ 17, \ 16, \ 14, \ 16, \ 14, \ 20, \ 19, \ 17, \ 19, \ 17, \\ 13, \ 10, \ 16, \ 13, \ 17, \ 12  \} \}$\\
$H^{2}$ is a Hadamard matrix of order $24$ obtained by Paley construction-II. The sign spectrum of strictly normalized of $H^{2}$ is\\
$S(H^{2})= \{\{0, \ 1, \ 3, \ 7, \ 15, \ 22, \ 16, \ 20, \ 16, \ 13, \ 12, \ 15, \ 13, \ 14, \ 20, \ 14, \ 13, \ 15, \ 16, \ 14, \ 15, \ 16, \\ 13, \ 15, \ 14, \ 13, \ 14, \ 19  \},\\
\{ 0, \ 1, \ 3, \ 7, \ 15, \ 22, \ 16, \ 16, \ 13, \ 20, \ 11, \ 16, \ 13, \ 14, \ 16, \ 13, \ 20, \ 14, \ 17, \ 14, \ 15, \ 16, \\ 13, \ 16, \ 11, \ 20, \ 11, \ 15\} \}$\\
$H^{3}$ is a Hadamard matrix of order $28$ of order $28$ constructed by Williamson construction. The sign spectrum of strictly normalized of $H^{3}$ is\\
$S(H^{3}) = \{ \{  0, \ 1, \ 3, \ 7, \ 9, \ 16, \ 13, \ 16, \ 16, \ 10, \ 14, \ 14, \ 17, \ 15, \ 14, \ 18, \ 14, \ 19, \ 18, \ 19, \ 16, \ 19 ,\\ 14,\ 13, \ 14, \ 19, \ 17, \ 13\},\\ \{ 0, \ 1, \ 3, \ 7, \ 9, \ 20, \ 16, \ 16, \ 14, \ 14, \ 13, \ 14, \ 15, \ 13, \ 16, \ 17, \ 19, \ 18, \ 16, \ 17, \ 20, \ 12, \\ 12, \ 17, \ 19, \ 12, \ 13, \ 15 \}$\\
$H^{4}$ is a Hadamard matrix of order $28$ constructed by our method taking $q=13$. The sign spectrum of strictly normalized of $H^{4}$ is\\
$S(H^{4})=\{ 0, \ 1, \ 3, \ 7, \ 13, \ 18, \ 18, \ 14, \ 20, \ 15, \ 16, \ 16, \ 11, \ 17, \ 16, \ 16, \ 14, \ 12, \ 19, \ 15, \ 19, \ 17, \\ 17, \ 14, \ 17, \ 10, \ 17, \ 6\},\\
\{ 0, \ 1, \ 3, \ 7, \ 15, \ 17, \ 15, \ 16, \ 20, \ 16, \ 14, \ 18, \ 14, \ 13, \ 12, \ 12, \ 16, \ 16, \ 18, \ 16, \ 15, \ 17, \\ 15, \ 19, \ 15, \ 13, \ 13, \ 12\} \}$\\
We see that set of sign spectrum  $S(H^{1}),S( H^{2}), S(H^{3}), S(H^{4})$ are all different
So we can say that Hadamard matrix of order $28$ constructed by our method is non-isomorphic to the previously constructed Hadamard matrix by Paley construction-I, Paley construction-II and Williamson construction.
\end{Example}

\textbf{Remark:} It has been observed that some of Hadamard matrices obtained by our method are non-equivalent to Hadamard matrices obtained by Paley constructions, combination of Kronecker product with Paley construction and Williamson construction.

  \section{Conclusion}
There are many article published in direction of construction of Hadamard matrices. Paley Construction is one of the popular and classic method of construction. Paley has given two methods on the basis of types of primes and popularly known as Paley construction-I and Paley Construction-II. In this article, Hadamard matrix of known order is used to generalize Paley construction-I and obtained a series of Hadamard matrices. Paley construction-II is also extended using Sylvester Hadamard matrix. Also existence of Hadamard matrices using twin primes has been discussed and unified in same construction.\\
  It has been observed that some Hadamard matrices obtained by this method is non-isomorphic to previously constructed Hadamard matrices.

\end{document}